\newtheorem{thm}{Theorem}[section]
\newtheorem{conj}[thm]{Conjecture}
\newtheorem{cor}[thm]{Corollary}
\newtheorem{lem}[thm]{Lemma}
\theoremstyle{definition}
\newtheorem{rem}[thm]{Remark}
\newcommand{\mc}{\mathcal}
\newcommand{\mf}{\mathfrak}
\newcommand{\mb}{\mathbb}
\newcommand{\R}{\mb{R}}
\newcommand{\C}{\mb{C}}
\newcommand{\N}{\mb{N}}
\newcommand{\Z}{\mb{Z}}
\newcommand{\lb}{\label}
\newcommand{\ef}{\eqref}
\newcommand{\leg}[2]{\genfrac{(}{)}{}{}{#1}{#2}}
\renewcommand{\bar}{\overline}
\newcommand{\bv}{\Bigl(}
\newcommand{\bn}{\Bigr)}
\newenvironment{acknowledgements} {\begin{abstract}} {\end{abstract}}
\newcommand{\proofpart}[2]{%
  \par
  \addvspace{\medskipamount}%
  \noindent\emph{Part #1: #2}\par\nobreak
  \addvspace{\smallskipamount}%
  \@afterheading
}
\title{On Completely multiplicative $\pm1$ sequences that omit many consecutive $+1$ values}
\author{Yichen You}
\address{Department of Mathematical Sciences, Durham University, Stockton Road, Durham, DH1 3LE, UK}
\email{yichen.you@outlook.com}
\begin{document}
\begin{abstract}
   We investigate the construction of $\pm1$-valued completely multiplicative functions that take the value $+1$ at at most $k$ consecutive integers, which we call length-$k$ functions. We introduce a way to extend the length based on the idea of the "rotation trick" \cite{klurman2021multiplicative} and such an extension can be quantified by the number of modified primes. Under the assumption of Elliott's conjecture, this method allows us to construct length-$k$ functions systematically for $k\geq 4$ which generalizes the work of I. Schur for $k = 2$ and R. Hudson for $k =3$.  
\end{abstract}
\maketitle

\section{Introduction}
We say $f: \N \to \C$ is \textit{completely multiplicative} if $f(ab) = f(a)f(b)$ for all $a, b \in \N$. Let 
\begin{align*}
    \mc{M}' &=\{f:\N \to \{+1, -1\}: \text{ $f$ is completely multiplicative}\}.
\end{align*}
Recent works in the theory of completely multiplicative functions have proved fruitful in understanding the extremal behaviour of bounded sequences, as in the Erd\H{o}s Discrepancy Problem. In 1957, P. Erd\H{o}s \cite{erdHos1957some} conjectured that for any sequence $\{f(n)\}_{n \in \N}$ consisting of $\pm1$, there is $x, d \in \N$ such that 
\begin{align*}
    \sup\limits_{x, d}\Big|\sum\limits_{n \leq x}f(nd)\Big| = \infty.
\end{align*}
In 2015, T. Tao \cite{tao2016erdHos} proved it by reducing $\{f(n)\}_{n \in \N}$ to $f \in \mc{M}'$. Moreover, his result indicated that every $f \in \mc{M}'$ satisfies 
\begin{align}\lb{ttao}
    \limsup\limits_{x \to \infty}|\sum\limits_{n \leq x}f(n)| = \infty.
\end{align}
One can find more works on classifying functions related to \ef{ttao} in \cite{aymone2022erdHos},\cite{klurman2021multiplicative}, \cite{klurman2018rigidity} and \cite{klurman2017correlations}.

Our main focus is about the pattern of  $\{f(n)\}_{n \in \N}$ for $f \in \mc{M}'$. 
We say that $f\in \mc{M}'$ is a \textit{length-}$k$ function if $k$ is the largest positive integer such that there exists $n \in \N$ for which $f(n+1)= \cdots= f(n+k) = +1$. It is natural to try to classify all $f \in \mc{M}'$ of length $k$, for each $k\geq 2$. \footnote{There is no length-$1$ function, as either $f(1)=f(2)=+1$, or $f(4)=f(5) = +1$, or $f(9)=f(10)=+1$.} Motivated by the results of D. H. Lehmer and E. Lehmer \cite{lehmer1962runs} on the first appearance of consecutive quadratic residues, W. H. Mills conjectured \cite{mills1965bounded} that there are only two length-$2$ functions $f_1, f_2$, for which
\begin{align*}
    f_i(p) = 
    \begin{cases}
    \leg{p}{3} &\text{ if } p\, \nmid \,3,\\
    (-1)^i &\text{ if } p = 3
    \end{cases}
\end{align*}
where $i = 1, 2$ and $\leg{\cdot}{3}$ is the Legendre symbol mod $3$, and I. Schur \cite{schur1973multiplicative} confirmed this. For $k = 3$, R. Hudson \cite{hudson1974totally} conjectured that there are only thirteen possibilities. This has recently been proved by O. Klurman, A. P. Mangerel, and J. Teräväinen \cite{klurman2023elliott}. 
\begin{thm}[Formerly Hudson's conjecture {\cite[Theorem 2.4]{klurman2023elliott}}]
Let $q \in \{5, 7, 11, 13, 53\}$ and $i = 1, 2$. Define
\begin{align*}
     f_{(q, i)}(p) &= 
    \begin{cases}
    \leg{p}{q} &\text{ if } p\, \nmid \,q,\\
    (-1)^i &\text{ if } p = q,
    \end{cases} \;\;\;\;
    f_{(4, i)}(p) = 
    \begin{cases}
    \leg{p}{4} &\text{ if } p\, \nmid \,4,\\
    (-1)^i &\text{ if } p = 2,
    \end{cases}\;\;\;\;
    g(p) = 
    \begin{cases}
    1 &\text{ if } p\, \nmid \,2,\\
    -1 &\text{ if } p = 2.
    \end{cases}
\end{align*}
If $f:\N \to \{+1, -1\}$ is a length-3 function, then $f$ must be one of the above.
\end{thm}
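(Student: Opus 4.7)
The plan is to approach this in three main stages. First, I would try to show that any length-$3$ function $f$ must be \emph{pretentious} to a real Dirichlet character $\chi$ modulo some conductor $q$, in the quantitative sense that $\sum_{p}(1 - f(p)\chi(p))/p < \infty$. The length-$3$ hypothesis is extremely restrictive: every block of four consecutive integers contains at least one $-1$, so $f$ takes the value $-1$ on a set of density at least $1/4$, while the existence of three-term $+1$ blocks forbids $f$ from being too oscillatory. Combined with structural theorems for completely multiplicative $\pm 1$-valued functions in the Hal\'asz--Elliott framework (and the recent advances toward Elliott's conjecture used in \cite{klurman2023elliott}), this rigidity should rule out the possibility that $f$ is non-pretentious or pretends to a non-real character.

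Given pretentiousness, the second step is to upgrade the bound to the assertion that $f(p) = \chi(p)$ for every prime $p \nmid q$. Since both functions take values in $\{\pm 1\}$ on primes, the summability already forces $\{p : f(p) \neq \chi(p)\}$ to satisfy $\sum 1/p < \infty$, hence to be very thin. To eliminate all perturbations at primes coprime to $q$, I would apply a rotation-style argument: if $f(p_0) \neq \chi(p_0)$ for some $p_0 \nmid q$, then the identity $f(p_0 n) = f(p_0)f(n)$ transplants the known $-1$-positions of $\chi$ (which occur in long runs for every fixed quadratic character) into $+1$-positions of $f$ inside the progression $p_0 \N$, contradicting the length-$3$ bound. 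Hence deviations are admitted only at primes dividing $q$, which precisely produces the parameter $i \in \{1,2\}$.

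Third, I would classify the admissible conductors. The requirement is that the sequence $\chi(1), \chi(2), \ldots$ itself contains no run of four consecutive $+1$s. For large $q$, character sum bounds of P\'olya--Vinogradov or Burgess type, combined with elementary sieve input on short intervals, produce such a run within $[1, q^{1/2+\varepsilon}]$, leaving only finitely many candidate conductors. A direct computation---in the spirit of the Lehmer--Lehmer table of smallest $n$ with $\chi(n+1) = \cdots = \chi(n+4) = 1$---should then isolate exactly $q \in \{4, 5, 7, 11, 13, 53\}$. The degenerate case where the character is trivial but $f$ is modified at $p = 2$ yields $g$, whose length-$3$ property is an elementary parity check: every four consecutive integers contain some $m$ with $v_2(m) = 1$.

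The hardest part is the first step: producing pretentiousness from a purely local combinatorial constraint. This is where the deep machinery of \cite{klurman2023elliott} is indispensable, synthesising Tao's entropy decrement argument, the rotation trick \cite{klurman2021multiplicative}, and sharp correlation estimates for multiplicative functions on short intervals. The remaining two steps are comparatively elementary once the pretentious reduction is in hand, reducing to character sum bounds and a short finite verification.
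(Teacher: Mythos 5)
First, a point of reference: the paper does not prove this theorem at all --- it is quoted as known background from \cite{klurman2023elliott}, so there is no internal proof to compare your sketch against. Judged on its own terms, your three-stage architecture (pretentiousness, rigidity at primes coprime to $q$, finite classification of conductors) does match the broad shape of the known argument, but each stage as you describe it has a genuine gap. The most serious is Stage 1: expanding $\sum_{n\le x}\prod_{j=0}^{3}(1+f(n+j))$ produces correlations of orders up to four, and Elliott's conjecture for four-point correlations of bounded multiplicative functions is \emph{not} known; the entire content of \cite{klurman2023elliott} is to circumvent exactly this, so invoking ``the deep machinery of \cite{klurman2023elliott}'' to establish Theorem 2.4 of that same paper is circular. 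You have not supplied, even in outline, the idea that replaces the missing case of Elliott, and this is the step on which everything else rests.

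Stage 2 does not work as you describe it. The progression $p_0\N$ contains no two consecutive integers, so ``transplanting'' sign patterns of $\chi$ into $p_0\N$ cannot produce four consecutive $+1$'s of $f$ and yields no contradiction with the length-$3$ bound. The correct mechanism is the rotation trick run in the opposite direction (cf.\ Lemma \ref{lem2.1} of the present paper): one locates a length-$4$ window in which $\tilde{\chi}_q$ has a single $-1$, and uses the Chinese remainder theorem together with the near-periodicity of $\tilde{\chi}_q$ to slide that window so the unique $-1$ falls on an integer exactly divisible by an odd power of $p_0$; flipping $f$ at $p_0$ then creates a forbidden run of four $+1$'s. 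Even this only treats $p_0>3$; the primes $p_0\in\{2,3\}$ with $p_0\nmid q$ need separate treatment, and they genuinely matter, since $g$ and $f_{(4,i)}$ are precisely characters modified at $2$. Finally, Stage 3 conflates ``$\chi$ has no run of four consecutive $+1$'s'' with the actual requirement: one must enumerate all modified characters $\tilde{\chi}_q$ (including the sign choices at $p\mid q$ and the principal/degenerate cases) and verify that each has length \emph{exactly} $3$ rather than at most $3$ --- for instance $q=3$ drops out because Schur's functions have length $2$. The P\'olya--Vinogradov/Weil reduction to finitely many conductors is sound in principle, but as written your sketch would not by itself isolate the list $\{4,5,7,11,13,53\}$ or exclude modifications at small primes coprime to $q$, which is where the real case analysis lives.
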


In light of Hudson's conjecture, we want to classify functions of higher length, say $k \geq 4$. By the above, we learn that there are finitely many length-$2$ and length-$3$ functions. For $k \geq 4$, conversely, it is possible to construct infinitely many examples. For instance, we can construct a length-$4$ function $f$ by
\begin{align*}
    f(p) = 
    \begin{cases}
    \leg{p}{5} &\text{ if } p\, \nmid \,5 \text{ and } q,\\
    -\leg{p}{5} &\text{ if } p = q,\\
    1 &\text{ if } p = 5,
    \end{cases} 
\end{align*}
where $q$ can be any prime with $q \equiv 2 $ mod $5$. As there are infinitely many choices of $q$, we can construct infinitely many length-$4$ functions $f$. Nevertheless, we believe that such counterexamples can be constructed in a systematic way. 

Let $\chi_q$ be a real character mod $q$. We define a \textit{modified character} $\tilde{\chi}_q \in \mc{M}'$ at each prime by
\begin{align*}
    \tilde{\chi}_q(p) = 
    \begin{cases}
        \chi(p) \;\; \text{if $p \nmid q$},\\
        \eta(p) \;\; \text{if $p | q$, where $\eta(p) \in \{+1, -1\}$} .
    \end{cases}
\end{align*}
Our main result shows that the length of $\tilde{\chi}_q$ can be extended to at least $k$ by altering its values at a set of finitely many prime numbers $p > k$ and $p \nmid q$, which we call \textit{modified primes}, whose size is covered by Theorem \ref{thm1.2}. We will use $\delta_q(k)$ to denote the minimal number of modified primes that are needed for the length of $\tilde{\chi}_q$ to be extended to at least $k$. 
\begin{thm}\lb{thm1.2}
    Let $k$ be a positive integer and $\tilde{\chi}_q$ be a modified character mod $q > 1$. Then we have $\delta(k) = \max\limits_{q\in \N}\delta_q(k) = \frac{1}{2}k + O(\log k)$.
\end{thm}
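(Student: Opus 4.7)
The theorem packages an upper bound $\delta_q(k)\le \tfrac{k}{2}+O(\log k)$ uniform in $q$ together with a lower bound $\delta_{q^*}(k)\ge \tfrac{k}{2}-O(\log k)$ for some specific $q^*$; I address them separately.

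\textbf{Upper bound.} Fix $\tilde{\chi}_q$ and let $N=k^C$ for an absolute constant $C$ chosen later. For $n\in[N,2N]$ let $B_n:=\{m\in(n,n+k]:\tilde{\chi}_q(m)=-1\}$. The key observation is that any prime $p>k$ has at most one multiple in a window of length $k$, so modifying $\tilde{\chi}_q(p)$ flips exactly one entry of $(n,n+k]$ and leaves every other integer of the window untouched. Consequently, if I can produce $n$ satisfying (i) $|B_n|\le \tfrac{k}{2}+O(\log k)$ and (ii) every $m\in B_n$ admits a prime factor $p(m)>k$ with $p(m)\nmid q$, then modifying $\tilde{\chi}_q$ on the injective family $S:=\{p(m):m\in B_n\}$ produces a function equal to $+1$ throughout $(n,n+k]$, so $\delta_q(k)\le|S|=|B_n|\le \tfrac{k}{2}+O(\log k)$. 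Condition (i) uses that any nontrivial completely multiplicative $\pm1$-valued function has mean zero (Wirsing), so the bad integers have density exactly $\tfrac{1}{2}$; the mean of $|B_n|$ is $\tfrac{k}{2}+o(k)$ and at least half of $n\in[N,2N]$ achieve $|B_n|\le \tfrac{k}{2}$. Condition (ii) uses Dickman's bound $\Psi(2N,k)-\Psi(N,k)\ll N\rho(C)$, which makes the set of $n$ whose window contains a $k$-smooth integer of zero density for $C$ large. Intersecting the two sets produces the required $n$, with the $O(\log k)$ slack absorbing edge effects from the $O(\log q)$ prime divisors of $q$ which may force reassignment of $p(m)$.

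\textbf{Lower bound.} The plan is to exhibit $q^*$ a prime with $q^*\asymp k/\log k$, so that the density bias $k/q^*$ fits inside the $O(\log k)$ error. Since each modification at a prime $>k$ flips a single integer of any window, $\delta_{q^*}(k)$ equals the minimum of $|B_n|$ taken over usable $n$; writing $S(n):=\sum_{n<m\le n+k}\chi_{q^*}(m)$, a window coprime to $q^*$ satisfies $|B_n|=\tfrac{1}{2}(k-S(n))+O(1)$, so the required inequality reduces to $|S(n)|=O(\log k)$ uniformly in $n$. This bound will be supplied by a Burgess-type short-character-sum estimate for an appropriate family of $q^*$, or by an averaging/pigeonhole argument over $q^*$ in a dyadic range.

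\textbf{Main obstacle.} The upper bound's $O(\log k)$ error appears to require Elliott's conjecture (as the abstract indicates) to control the interaction between (i) and (ii); guaranteeing a positive-density set of $n$ satisfying both simultaneously demands a joint concentration statement not accessible from a pure density argument. The lower bound's tension between $q^*\asymp k/\log k$ and $|S(n)|\le O(\log k)$ sits just beyond plain P\'olya--Vinogradov, so the hardest single step will be producing a $q^*$ with the required short-sum control, likely by invoking Burgess together with a pigeonhole over $q^*$ in the prescribed range. Once both concentration statements are in hand, aligning $|B_n|$ with its mean and intersecting the relevant density-one sets is routine.
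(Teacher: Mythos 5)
Your upper bound takes a genuinely different (and in principle repairable) route from the paper, but it has real gaps, and your lower bound strategy fails; the theorem is also unconditional, contrary to your closing remark --- Elliott's conjecture enters only in Corollary \ref{cor1.4}(a), not in Theorem \ref{thm1.2}.

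On the upper bound: the paper does not hunt for a window whose bad elements happen to have large prime factors. Instead (Lemma \ref{lem2.1}) it uses the near-periodicity of $\tilde{\chi}_q$ and the Chinese remainder theorem to \emph{manufacture} a window $I_k(n')$ with the same sign pattern as a chosen window $I_k(n)$ in which the $j$-th bad element is divisible by a prescribed prime $p_j>k$, $p_j\nmid q$, to an \emph{odd} power; this makes the number of modified primes exactly the number of $-1$'s in the window, with no smoothness or density input. Your version omits the odd-multiplicity requirement entirely: if $p(m)^2\,\|\,m$ and $m$ has no other admissible large prime, flipping $f(p(m))$ does not change $f(m)$, so you must also excise such $m$ (a density-zero set, but it needs saying). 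Your appeal to ``Wirsing: any nontrivial completely multiplicative $\pm1$ function has mean zero'' is false for $\tilde{\chi}_q$ with $\chi_q$ principal --- there the mean is strictly positive (the paper computes it via Delange's theorem), which happens to help you, but the stated justification is wrong; and ``at least half of $n$ achieve $|B_n|\le k/2$'' does not follow from a bound on the mean (you only get a proportion $\gg 1/k$, which still suffices to intersect with your density-one set). The paper's alternative for non-principal $\chi_q$ is a logarithmic average of window sums evaluated by Perron's formula (Lemma \ref{lem2.2}), yielding the clean bound $\delta(k)\le\frac12 k$ with no error term.

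The serious gap is the lower bound. You propose $q^*\asymp k/\log k$ and need $\bigl|\sum_{n<m\le n+k}\chi_{q^*}(m)\bigr|=O(\log k)$ \emph{uniformly in $n$}. This is unattainable: after removing complete residue systems, the window leaves an incomplete character sum over an interval of length $k\bmod q^*$ at an arbitrary starting point, and for every non-principal character mod $q^*$ one has $\max_{x}\bigl|\sum_{m\le x}\chi_{q^*}(m)\bigr|\gg\sqrt{q^*}$, which is $\gg\sqrt{k/\log k}\gg\log k$. Burgess only enlarges the range in which one gets \emph{some} cancellation; neither it nor a pigeonhole over $q^*$ in a dyadic range delivers $O(\log k)$ for all windows. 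The paper's lower bound goes the opposite way: it fixes the \emph{small} prime $q=3$ and counts $-1$'s of $\tilde{\chi}_3$ in an arbitrary window by a base-$3$ recursion. Each complete pair $\{3j+1,3j+2\}$ contributes exactly one $-1$; the multiples of $3$ in the window reduce (after factoring out $\tilde{\chi}_3(3)$) to the same counting problem on a window of length about $k/3$; the recursion has depth $\lfloor\log k/\log 3\rfloor$ and loses only $O(1)$ per level from the incomplete classes at the ends, giving $\delta_3(k)\ge\frac12 k-O(\log k)$ by a wholly elementary argument. You should replace your lower-bound strategy with this one (or something equivalent); as written, the hardest step you flag is not merely hard but false in the form you need it.
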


Here we briefly explain how the modified primes can extend the length of $\tilde{\chi}_q$. Let $I=\{a+1, \ldots, a+k\} \subset \N $ be an interval of length $k$ and $(\tilde{\chi}_q(a+1), \ldots, \tilde{\chi}_q(a+k))$ be the sign pattern of $\tilde{\chi}_q$ on $I$, denoted by $\tilde{\chi}_q(I)$. We assume that the length of $\tilde{\chi}_q$ is less than $k$, i.e., $\tilde{\chi}_q(I) \neq (+1, \ldots, +1)$ for all $I$. Since $\tilde{\chi}_q$ inherits some amount of periodicity from $\chi_q$, by choosing an appropriate $a$ according to the choice of modified primes, we can find an interval $I'$ of length $k$ that for all $n \in I'$ with $\tilde{\chi}_q(n) = -1$, there is a unique $p > k, p \nmid q$ such that $p^\nu \parallel n$ for some odd integer $\nu > 0$. By modifying $\tilde{\chi}_q(p)$ to $-\tilde{\chi}_q(p)$ for all modified primes $p$, all the values of $-1$ in $\tilde{\chi}_q(I')$ will turn to $+1$, and hence the length of $\tilde{\chi}_q$ is extended to at least $k$. Lemma \ref{lem2.1} is based on this idea.  

Before giving the application of the extension to $f \in \mc{M}'$, we will introduce some concepts from the pretentious approach to analytic number theory of Granville and Soundararajan \cite{Pretentiousapproach}. Let $\mc{M}$ denote the set of multiplicative functions $f: \N \to \C$ with $|f(n)| \leq 1$ for all $n\in \N$. Given $x \geq 1$, the \textit{pretentious distance} between $f, g \in \mc{M}$ is
\begin{equation*}
    \mb{D}(f, g;x) = \Bigl(\sum\limits_{p\leq x}\frac{1-\mf{Re}(f(p)\bar{g(p)})}{p}\Bigr)^{1/2}.
\end{equation*}
This satisfies the triangle inequality:
\begin{equation}\lb{tri}
    \mb{D}(f_1, h_1;x) + \mb{D}(f_2, h_2;x) \geq \mb{D}(f_1f_2, h_1h_2;x) \text{ for } f_1, f_2, h_1, h_2 \in \mc{M}.
\end{equation}
Let $f, g \in \mc{M}$. We say that $f$ is \textit{pretentious} to $g$ if
\begin{align*}
   \mb{D}(f, g;x) = O(1) \text{ as } x \to \infty.
\end{align*}
We say furthermore that $f$ is a \textit{pretentious function} if $f$ is pretentious to a twisted character $\chi(n)n^{it}$ where $\chi(n)$ is a Dirichlet character and $t \in \R$, otherwise, it is a \textit{non-pretentious function}. Additionally, if $f \in \mc{M}$ is real, $f$ can only be pretentious to a real character (in the proof of Corollary 1.4).

\begin{conj}[Elliott's conjecture]\lb{conj1.3}
  For any fixed $a_i, b_i, N \in \N$ such that $a_ib_j \neq a_jb_i$ for all $i, j = 1, 2, ..., N \text{ and } i \neq j$, if $f\in\mc{M}'$ is non-pretentious \footnote{The general condition for $f \in \mc{M}$ is $\inf\limits_{\chi, |t| \leq x} \mb{D}(f, \chi(n)n^{it}; x)^2 \to \infty$ as $x \to \infty$. }, then
\begin{equation}\lb{Elliott}
    \sum\limits_{n \leq x}\prod\limits_{i = 1}^N f(a_in+b_i) = o(x).
\end{equation}  
\end{conj}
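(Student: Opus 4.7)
\emph{Plan.} The approach is by induction on $N$, combining Hal\'asz-type mean-value bounds, the Matom\"aki--Radziwi{\l}{\l} short-interval theorem, and the entropy-decrement method of Tao for iterated correlations, upgraded to Ces\`aro averaging via Gowers-uniformity estimates for multiplicative functions.

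\emph{Base case $N=1$.} The bound $\sum_{n \leq x} f(a_1 n + b_1) = o(x)$ reduces to Hal\'asz's mean-value theorem after restricting $f$ to the residue class $b_1 \pmod{a_1}$. The non-pretentiousness hypothesis $\inf_{\chi, |t| \leq x}\mb{D}(f, \chi n^{it}; x) \to \infty$ propagates to every fixed residue class by the triangle inequality \ef{tri} applied to a Dirichlet twist, and Hal\'asz then forces the restricted mean to be $o(1)$.

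\emph{Inductive step.} Fix $N \geq 2$ and assume \ef{Elliott} for fewer than $N$ correlations. Write $S(x) = \sum_{n \leq x}\prod_{i=1}^N f(a_i n + b_i)$. For each prime $p$ in a slowly-growing dyadic window $[P, 2P]$, complete multiplicativity gives $f(pn) = f(p)f(n)$, allowing $S(x)$ to be compared with the dilated sum $S_p(x) = \sum_{m \leq x/p}\prod_i f(a_i p m + b_i)$. Summing over $p$ and applying Cauchy--Schwarz, the discrepancy reduces to a double correlation of $f$ along short arithmetic progressions of length $\asymp p$, controlled by Matom\"aki--Radziwi{\l}{\l}. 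To close the induction, one invokes a finitary $U^{N-1}$-structure theorem: decompose $f \mathbf{1}_{[1,x]} = f_{\mathrm{str}} + f_{\mathrm{unif}}$ with $\|f_{\mathrm{unif}}\|_{U^{N-1}[1,x]} = o(1)$ and $f_{\mathrm{str}}$ close to a bounded-complexity nilsequence. The generalized von Neumann theorem kills the contribution of $f_{\mathrm{unif}}$, while a multiplicative inverse theorem would force any nilsequence correlating nontrivially with a non-pretentious $f \in \mc{M}'$ to vanish in the Ces\`aro limit.

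\emph{Main obstacle.} The crucial difficulty is upgrading the entropy-decrement argument from logarithmic to Ces\`aro averaging. Tao's decrement yields bounds of the form $\sum_{n \leq x}\frac{1}{n}\prod_i f(a_i n + b_i) = o(\log x)$, because the entropy contraction at each prime $p$ carries a weight of $1/p$, producing a harmonic rather than an arithmetic average over $n$. Passing to the Ces\`aro form $S(x) = o(x)$ demanded by \ef{Elliott} requires either a new entropy functional that contracts under unweighted prime averages, or a quantitative $U^{N-1}$-uniformity estimate for $f$ holding on a \emph{positive proportion} of short intervals of length $H(x) \to \infty$, rather than almost all of them in the Matom\"aki--Radziwi{\l}{\l} sense. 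I expect this to be by far the hardest step, since it subsumes the still-open Ces\`aro form of Chowla's conjecture for the Liouville function; carrying it through would require substantial new quantitative input, likely via a sharpening of the Fourier-uniformity estimates of Matom\"aki--Radziwi{\l}{\l}--Tao and Ter\"av\"ainen to the pointwise rather than averaged regime.
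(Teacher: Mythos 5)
The statement you are trying to prove is stated in the paper as a \emph{conjecture} (Elliott's conjecture), not a theorem: the paper offers no proof of it and only ever invokes it as a hypothesis, in part (a) of Corollary \ref{cor1.4}. So there is no ``paper proof'' to compare against, and any complete argument you gave would be a major new result rather than a reproduction of something in the text.

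Your proposal is not such an argument, and you essentially say so yourself. The base case $N=1$ via Hal\'asz is fine, and the dilation-by-$p$ comparison together with the entropy-decrement method is the correct description of how the known partial results (Tao's two-point theorem, Tao--Ter\"av\"ainen for odd-order correlations) are obtained --- but those results are only for \emph{logarithmically} averaged correlations, and your ``Main obstacle'' paragraph concedes that upgrading to the Ces\`aro average $\sum_{n\le x}\prod_i f(a_in+b_i)=o(x)$ is open and subsumes the unaveraged Chowla conjecture. Two further steps in your inductive scheme are themselves open problems rather than available tools: the claim that a non-pretentious $f\in\mc{M}'$ has $\|f\mathbf{1}_{[1,x]}\|_{U^{N-1}}=o(1)$ after removing a structured part (higher-order Fourier uniformity of multiplicative functions at the Ces\`aro scale), and the ``multiplicative inverse theorem'' ruling out correlation with nilsequences. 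A proof sketch whose key steps are each equivalent to, or harder than, the statement being proved is not a proof; the honest conclusion is that the statement remains conjectural, which is exactly how the paper treats it.
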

Under the assumption of Conjecture \ref{conj1.3}, any $f \in \mc{M}'$ of finite length is pretentious to a real primitive (not necessarily non-principal) character $\chi_q$ (the detailed proof will be in the next section). As a consequence, the extension of $f$ of finite length can be reduced to the extension of a corresponding modified character $\tilde{\chi}_q$.
\begin{cor}\lb{cor1.4}
    Given $k \in \N$, let $f$ be a length-$k$ function. 
    \begin{itemize}
        \item[(a)] Assuming Conjecture \ref{conj1.3}, there is a real character $\chi_q$ mod $q>1$ such that $f$ is pretentious to $\chi_q$.
        \item[(b)] Suppose $f$ is pretentious to a real character $\chi_q$ mod $q>1$. Let 
        \begin{align} \lb{defofJ}
            \mc{J}(k) = \{p\in \mc{P}: f(p) \neq \chi_q(p) \text{ with } p \nmid q\text{ and } p > k\}.
        \end{align}
    Then $|\mc{J}(k)| \leq \frac{1}{2}k$.
    \end{itemize}
\end{cor}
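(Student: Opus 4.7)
The plan is to prove (a) by contraposition using Elliott's conjecture, and (b) via a contradiction argument that mimics the extension construction underlying Lemma~\ref{lem2.1}.

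\emph{Part (a).} I would assume, for contradiction, that $f$ has length at most $k$ but is non-pretentious. Since $f$ never takes $+1$ on $k+1$ consecutive integers, the pointwise identity $\prod_{i=1}^{k+1}\frac{1+f(n+i)}{2}=0$ holds; summing over $n\le x$ and expanding via binomials yields
\[
0=\sum_{n\le x}\prod_{i=1}^{k+1}\frac{1+f(n+i)}{2}=\frac{x}{2^{k+1}}+\frac{1}{2^{k+1}}\sum_{\emptyset\neq S\subseteq\{1,\ldots,k+1\}}\sum_{n\le x}\prod_{i\in S}f(n+i)+O(1).
\]
For each non-empty $S$ with $a_i=1,\,b_i=i$, the condition $a_ib_j\neq a_jb_i$ becomes $i\neq j$, so Conjecture~\ref{conj1.3} forces every non-empty correlation to be $o(x)$; this leaves $x/2^{k+1}=o(x)$, a contradiction for large $x$. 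Hence $f$ pretends to some $\chi(n)n^{it}$. Since $f^{2}\equiv 1$, applying the triangle inequality \ef{tri} with $f_{1}=f_{2}=f$ and $h_{1}=h_{2}=\chi(n)n^{it}$ gives $\mb{D}(1,\chi^{2}(n)n^{2it};x)=O(1)$, and by the classical fact that a twisted character pretending to the constant $1$ must itself be principal with trivial twist, I would conclude $2t=0$ and $\chi^{2}$ principal, hence $t=0$ and $\chi$ real. To exclude $q=1$: if $f$ pretended to the constant $1$, the convergence of $\sum_{p:f(p)=-1}1/p$ together with a Jacobsthal-type CRT/sieve argument on the sparse prime support of $\{p:f(p)=-1\}$ would produce arbitrarily long $+1$-runs, contradicting finite length.

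\emph{Part (b).} I would fix a length-$k$ run $I=(a,a+k]$ of $f$ and decompose $f=\tilde\chi_{q}\cdot h$, where $\tilde\chi_{q}(p)=f(p)$ for $p\mid q$ and $\tilde\chi_{q}(p)=\chi_{q}(p)$ for $p\nmid q$, while $h$ is the $\pm 1$-valued completely multiplicative function with $h(p)=-1$ precisely on $\mc{J}(k)\cup\mc{S}_{\text{small}}$, where $\mc{S}_{\text{small}}=\{p\le k:\,p\nmid q,\,f(p)\neq\chi_{q}(p)\}$. The constraint $f\equiv+1$ on $I$ becomes $h(n)=\tilde\chi_{q}(n)$. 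The pivotal structural fact is that every $p\in\mc{J}(k)$ satisfies $p>k$, and so divides at most one element of any interval of length $k+1$. Arguing by contradiction with $|\mc{J}(k)|>\tfrac{1}{2}k$, I would mimic the construction underlying Lemma~\ref{lem2.1}: exploiting the $q$-periodicity of $\chi_{q}$ together with a CRT-placement of the primes of $\mc{J}(k)$, I would construct a shifted interval $I'=(a',a'+k+1]$ in which (i) the number of positions $n\in I'$ where $\tilde\chi_{q}(n)\cdot h_{\text{small}}(n)=-1$ is at most $|\mc{J}(k)|$, and (ii) each such position is divided to odd multiplicity by a unique $p\in\mc{J}(k)$. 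Applying the resulting flips gives $f\equiv+1$ on $I'$, a length-$(k+1)$ run contradicting that $f$ has length exactly $k$.

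The main obstacle is securing the \emph{sharp} constant $\tfrac{1}{2}k$ in (b) rather than the $\tfrac{1}{2}k+O(\log k)$ of Theorem~\ref{thm1.2}. A naive counting only delivers the latter; closing the $O(\log k)$ gap demands tight parity bookkeeping over $I'$ and a careful interplay of the primes in $\mc{J}(k)$ with the periodic structure of $\chi_{q}\bmod q$ and the contributions $h_{\text{small}}$, so that every $-1$-position is matched by exactly one flip with no collateral flip on a $+1$-position. The exclusion of $q=1$ in (a) is likewise delicate: it needs a genuinely independent mean-value/sieve argument rather than any circular appeal to Theorem~\ref{thm1.2}.
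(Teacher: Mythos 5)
Your part (a) is essentially the paper's argument: the same expansion of $\sum_{n\le x}\prod_i(1+f(n+i))$, the same appeal to Conjecture \ref{conj1.3} to kill every off-diagonal correlation, and the same use of the triangle inequality \eqref{tri} to force $t=0$ and $\chi$ real (the paper invokes \cite[Lemma C.1]{matomaki2015averaged} for realness, but your squaring argument reaches the same conclusion). One remark: the exclusion of $q=1$ that you flag as delicate is a non-issue for the literal statement, since pretending to the constant function $1$ is the same as pretending to the principal character mod $2$, a real character with $q=2>1$; no Jacobsthal-type sieve is needed (the fact you sketch is true, but it is not required here).

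For part (b), your contradiction scheme --- build a length-$(k+1)$ run by CRT-placing the primes of $\mc{J}(k)$ on the $-1$ positions of the base function --- is exactly the mechanism the paper gestures at (``the length increases with the number of modified primes''), and you correctly isolate the key structural fact that a prime $p>k$ meets an interval of length $k+1$ at most once. Two corrections to your own assessment of the obstacles. First, the worry about the sharp constant is misplaced: the $O(\log k)$ in Theorem \ref{thm1.2} comes only from the \emph{lower} bound; the upper bound of Section \ref{sec2.1} is the clean $\delta(k)\le\frac12 k$, so applied to intervals of length $k+1$ it produces an interval with at most $\lfloor (k+1)/2\rfloor\le\lfloor k/2\rfloor+1$ positions to repair, which is precisely what $|\mc{J}(k)|>\frac12 k$ supplies; no extra parity bookkeeping is needed to close an $O(\log k)$ gap. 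Second, the genuine gaps are the ones you mention only in passing: (i) the mean-value estimates of Section \ref{sec2.1} are proved for $\tilde\chi_q$, not for $\tilde\chi_q\cdot h_{\mathrm{small}}$, so you must verify that twisting by the finitely many primes $p\le k$ with $f(p)\neq\chi_q(p)$ preserves the positivity (principal case) and the $o(1)$ decay (non-principal case) of the relevant logarithmic means --- this works because the extra Euler factors are finite in number and nonvanishing at $s=1$, but it must be said; and (ii) $\mc{J}(k)$ is not a priori finite, so the ``unused'' primes must be shown to avoid the target interval, which requires $\sum_{p\in\mc{J}(k)}1/p<\infty$ (available from pretentiousness) together with a positive-proportion count of admissible shifts rather than a single CRT solution. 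Both points are also elided in the paper's one-sentence proof of (b), so your proposal is, if anything, more explicit than the source about where the remaining work lies.
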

By Lemma \ref{lem2.1}, we can see that the length increases with the number of modified primes, and hence Corollary \ref{cor1.4} follows from Theorem \ref{thm1.2} immediately.
\begin{rem}
    In our extension, we only consider modified primes $ p > k$. It remains to investigate a more general case where the modified primes involve primes $p \leq k$, i.e., modified primes can be any primes, which allows one to construct length-$k$ functions in a more general way. Unlike modified primes $ p > k$, the number of flipped values of $f(n)$ in an interval of length $k$ by a modified prime $p \leq k$ is more than one. Therefore, it would be fairly tricky to determine $\delta(k)$. If we include $p \leq k$ in the count given by $\delta(k)$ then we can currently only bound it crudely by $\frac{k}{\log k}$ using the prime number theorem.
\end{rem}

\section{Proof of Theorem \ref{thm1.2} and Corollary \ref{cor1.4}} 
We will use auxiliary Lemma \ref{lem2.1} and Lemma \ref{lem2.2} to prove Theorem \ref{thm1.2}. Let $S \subset \mc{P}$ be a subset of prime numbers. We define $\lambda_{S}\in \mc{M}'$ at each prime by
\begin{align*}
    \lambda_{S}(p) =
    \begin{cases}
        1 &\text{ if } p \in \mc{P}-S,\\
        -1 &\text{ if } p \in S.
    \end{cases}    
\end{align*}

\begin{lem}\lb{lem2.1}
    Let $k \in \N$ and $I_k$ be an interval of length $k$. Let $\tilde{\chi}_q$ be a modified character mod $q$ and $r$ denote the number of integers $n \in I_k$ where $\tilde{\chi}_q(n) = -1$. Let $P_r = \{p_1, \ldots, p_r\}$ be a set of $r$ distinct primes with $p_i>k, p_i \nmid q$ for $i = 1,\ldots, r$. We define $f(n) = \tilde{\chi}_q(n)\lambda_{P_r}(n)$.
    Then we have $f(I) = (+1, \ldots, +1)$ for some interval $I$ of length $k$. 
\end{lem}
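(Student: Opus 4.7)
The plan is to use the Chinese Remainder Theorem to produce an interval $I=\{b+1,\ldots,b+k\}$ on which $\tilde{\chi}_q$ has the same sign pattern as on $I_k$, but such that each of the $r$ positions where $\tilde{\chi}_q = -1$ is divisible by exactly one $p_s$ to the first power (so $\lambda_{P_r} = -1$ there), while the remaining positions are coprime to every $p_s$ (so $\lambda_{P_r} = +1$ there); then $f = \tilde{\chi}_q\lambda_{P_r}$ is identically $+1$ on $I$. Writing $I_k = \{a+1,\ldots,a+k\}$, let $j_1 < \cdots < j_r$ be the indices in $\{1,\ldots,k\}$ with $\tilde{\chi}_q(a+j_s) = -1$.

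I would impose two families of congruences on $b$. First, for each $s=1,\ldots,r$, set $b \equiv p_s - j_s \pmod{p_s^2}$, so that $p_s \parallel b+j_s$. Since $p_s > k$, for every $j \in \{1,\ldots,k\}$ with $j \neq j_s$ we have $b+j \equiv j-j_s \not\equiv 0 \pmod{p_s}$; applied over all $s$, these give $\lambda_{P_r}(b+j_s) = -1$ and $\lambda_{P_r}(b+j) = +1$ for $j \notin \{j_1,\ldots,j_r\}$. Second, to preserve the values of $\tilde{\chi}_q$, I would impose $b \equiv a \pmod{q^N}$ for $N$ sufficiently large that $(N-1)v_p(q) \geq v_p(a+j)$ for every $p\mid q$ and every $j \in \{1,\ldots,k\}$. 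Because each $p_s \nmid q$, the moduli $q^N, p_1^2, \ldots, p_r^2$ are pairwise coprime, so CRT yields such a $b$.

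To verify $\tilde{\chi}_q(b+j) = \tilde{\chi}_q(a+j)$, factor $a+j = M_j m_j$ and $b+j = M_j' m_j'$, where $M_j,M_j'$ are supported on primes dividing $q$ and $\gcd(m_j m_j', q) = 1$. The choice of $N$ forces $v_p(b+j) = v_p(a+j)$ for every $p\mid q$, hence $M_j = M_j'$; then $q^N \mid M_j(m_j' - m_j)$ combined with $q \mid q^N/M_j$ gives $m_j' \equiv m_j \pmod q$, so $\chi_q(m_j') = \chi_q(m_j)$, and by multiplicativity $\tilde{\chi}_q(b+j) = \tilde{\chi}_q(M_j)\chi_q(m_j') = \tilde{\chi}_q(M_j)\chi_q(m_j) = \tilde{\chi}_q(a+j)$. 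Combining this with the $\lambda_{P_r}$ calculation yields $f(b+j) = +1$ for every $j \in \{1,\ldots,k\}$, so $f(I) = (+1,\ldots,+1)$.

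The main obstacle is the bookkeeping at primes dividing $q$: unlike a Dirichlet character, $\tilde{\chi}_q$ is not purely $q$-periodic, as its value at $n$ depends on the full $p$-adic valuations at $p\mid q$ through the arbitrary choices $\eta(p) \in \{\pm 1\}$. Consequently $b \equiv a \pmod q$ alone is insufficient and one must pass to a high power $q^N$; the complementary hypothesis $p_s > k$ is what makes the congruence conditions at the distinct primes $p_s$ mutually compatible across the $k$ positions of the target interval.
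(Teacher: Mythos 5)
Your proof is correct and follows essentially the same route as the paper: a Chinese Remainder Theorem shift producing a new interval on which each $-1$ position of $\tilde{\chi}_q$ acquires exactly one modified prime to an odd power (here the first power) while the sign pattern is preserved by a congruence modulo a high power of $q$, so that $\lambda_{P_r}$ flips precisely those entries. The only difference is cosmetic: where the paper invokes Lemma~9.4 of Klurman--Mangerel--Ter\"av\"ainen for the invariance of $\tilde{\chi}_q$ under the shift $b \equiv a \pmod{q^N}$, you verify that invariance directly via the $p$-adic valuation bookkeeping at primes dividing $q$, which makes your write-up self-contained.
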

\begin{proof}
    Without loss of generality, we assume that the length of $\tilde{\chi}_q$ is less than $k$. Let $[k]= \{1, \ldots, k\}$ and
    $I_k(n) =n + [k]= \{n+1, \ldots, n+k\}$ be an interval of length $k$ starting from $n+1 \in \N$ with $n+1 \equiv m$ mod $q$ for some $0 \leq m < q$. And let $\tilde{\chi}_q(I_k(n)) = (\tilde{\chi}_q(n+1), \ldots, \tilde{\chi}_q(n+k))$ denote the sign pattern of $\tilde{\chi}_q$ on $I_k(n)$. Suppose that $\tilde{\chi}_q$ only takes value of $-1$ at $n+J = \{n+a_1, \ldots, n+a_{r}\} \subset I_k(n)$ where $J = \{a_1, \ldots, a_r\}$. Let $p_1, \ldots, p_{r}$ be $r$ distinct primes greater than $k$ and coprime to $q$. By the Chinese remainder theorem, there exist solutions to the system below:
\begin{align*}
    n'+1 &\equiv m \text{ mod }q, \;\;\;  n'+a_j \equiv 0 \text{ mod }p_j \text{ for } j = 1, \ldots, r.
\end{align*}
Suppose that the solutions are in the form of $ n' = N + \alpha Q$ where $N \in \N$ is fixed, $\alpha \in \Z$, and $Q = q\prod\limits_{j = 1}^{r}p_j$. We can choose an $\alpha$ satisfying $p_j^{\nu_j} \parallel n'+a_j$ with $\nu_j$ odd for $j = 1, \ldots, r$ and $q^{n+1} | n'-n$. Then $\tilde{\chi}_q(I_k(n)) = \tilde{\chi}_q(I_k(N+\alpha Q)) = \tilde{\chi}_q(I_k(n'))$ by  \cite[Lemma 9.4]{klurman2023elliott}. Besides, $p_j \nmid n'+a_i$ for all $i \neq j$, and each $ 1\leq j \leq r$ since $p_j > k$ for $ 1\leq j \leq r$. Hence, we have
\begin{align*}
    f(n'+a_j) &= \tilde{\chi}_q(n'+a_j) = \tilde{\chi}_q(n+a_j) \text{ for } a_j \in [k]-J,\\
    f(n'+a_j) &= -\tilde{\chi}_q(p_j)^{\nu_j}\tilde{\chi}_q\Bigg(\frac{n'+a_j}{p_j^{\nu_j}}\Bigg) = -\tilde{\chi}_q(n'+a_j)=-\tilde{\chi}_q(n+a_j) = 1 \text{ for $a_j \in J$}.
\end{align*}
Hence, we have $f(m) = +1$ for all $m \in n' + [k]$. 
\end{proof}
\begin{lem}\lb{lem2.2}
    Let $Q >0$ and $\chi_q$ be a non-principal character mod $q$. Then we have
    \begin{align*}
        \frac{1}{\log Q}\sum\limits_{n \leq Q}\frac{\tilde{\chi}_q(n)}{n} = \frac{1}{\log Q}\prod\limits_{p | q}(1-\tilde{\chi}_q(p)p^{-1}) L(1, \chi_q) + O(qQ^{-1/8}).
    \end{align*}
\end{lem}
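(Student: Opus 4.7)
The plan is to exploit the factorization of the Dirichlet series $L(s,\tilde{\chi}_q)$. Since $\tilde{\chi}_q$ agrees with $\chi_q$ on integers coprime to $q$ and differs only at the finitely many primes $p \mid q$, one has the formal identity $L(s,\tilde{\chi}_q) = L(s,\chi_q)\prod_{p\mid q}(1-\tilde{\chi}_q(p)p^{-s})^{-1}$, and the aim is to turn this into a quantitative statement at $s=1$ with an explicit error depending on $Q$ and $q$.

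Concretely, I would first factor each $n\leq Q$ uniquely as $n=dm$ with $d$ supported on primes dividing $q$ and $(m,q)=1$, obtaining
\[
\sum_{n\leq Q}\frac{\tilde{\chi}_q(n)}{n} = \sum_{\substack{d\leq Q\\ p\mid d\Rightarrow p\mid q}}\frac{\tilde{\chi}_q(d)}{d}\sum_{m\leq Q/d}\frac{\chi_q(m)}{m},
\]
using complete multiplicativity together with $\tilde{\chi}_q(m)=\chi_q(m)$ for $(m,q)=1$ (and $\chi_q(m)=0$ otherwise, so coprimality is automatic in the inner sum). For the inner sum I would apply Abel summation combined with the bound $|\sum_{m\leq X}\chi_q(m)|\leq q$, valid for non-principal $\chi_q$ by periodicity, to get $\sum_{m\leq X}\chi_q(m)/m = L(1,\chi_q) + O(q/X)$. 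Substituting back yields
\[
\sum_{n\leq Q}\frac{\tilde{\chi}_q(n)}{n} = L(1,\chi_q)\sum_{\substack{d\leq Q\\ p\mid d\Rightarrow p\mid q}}\frac{\tilde{\chi}_q(d)}{d} + O\Bigl(\frac{q\,\Psi_q(Q)}{Q}\Bigr),
\]
where $\Psi_q(Q) = \#\{d\leq Q : p\mid d\Rightarrow p\mid q\}$. The remaining $d$-sum is then completed to the full Euler product $\prod_{p\mid q}(1-\tilde{\chi}_q(p)/p)^{-1}$, and both the truncation tail and $\Psi_q(Q)$ are controlled by Rankin's trick with parameter $\sigma=1/8$:
\[
\Bigl|\sum_{\substack{d>Q\\ p\mid d\Rightarrow p\mid q}}\frac{\tilde{\chi}_q(d)}{d}\Bigr| \leq Q^{-1/8}\prod_{p\mid q}(1-p^{-7/8})^{-1}, \qquad \Psi_q(Q)\leq Q^{7/8}\prod_{p\mid q}(1-p^{-7/8})^{-1}.
\]

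The hard part will be verifying that the Euler-product factor $\prod_{p\mid q}(1-p^{-7/8})^{-1}$ is absorbed into the stated error $O(qQ^{-1/8})$ uniformly in $q$. Using $\omega(q)\ll \log q/\log\log q$ together with the observation that the prime divisors of $q$ cannot all be too small (so that $\sum_{p\mid q}p^{-7/8}$ is controlled), one checks that this product is $q^{o(1)}$, which suffices in the non-vacuous regime $Q \geq q^{C}$ for sufficiently large $C$; otherwise the advertised bound is trivial. Collecting main and error terms and invoking $L(1,\chi_q)=O(\log q)$ produces the desired equality, and dividing through by $\log Q$ recovers the precise form stated in the lemma. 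An alternative route via Perron's formula with a contour shift past $s=0$ would pick up the same residue $L(1,\chi_q)\prod_{p\mid q}(1-\tilde{\chi}_q(p)/p)^{-1}$ directly but would require subconvex bounds for $L(s,\chi_q)$ on the line $\mathfrak{Re}(s)=7/8$, which is heavier machinery than needed.
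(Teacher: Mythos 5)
Your argument is sound, and it takes a genuinely different route from the paper. The paper proves this lemma analytically: it applies the effective Perron formula to write $\sum_{n\le Q}\tilde{\chi}_q(n)n^{-1}$ as a contour integral of $L(1+s,\tilde{\chi}_q)Q^s/s$, shifts the contour to $\mathfrak{Re}(1+s)=1/2$ picking up the residue $L(1,\tilde{\chi}_q)$ at $s=0$, and bounds the three legs of the rectangle with the convexity estimate $|L(\sigma+it,\chi_q)|\ll (q(4+|t|))^{1-\sigma}\log (q(4+|t|))$, choosing $T=\sqrt{Q}$. Your decomposition $n=dm$ with $d$ supported on the primes dividing $q$ and $(m,q)=1$, followed by Abel summation on the inner sum (using only $|\sum_{m\le X}\chi_q(m)|\le q$) and Rankin's trick on the $d$-sum, reaches the same conclusion entirely elementarily, with no contour integration and no $L$-function bounds; that is a reasonable trade. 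One small correction to your closing remark: the Perron route does not need subconvexity --- the convexity bound already gives exponent $(q(1+|t|))^{1/8}$ on the line $\mathfrak{Re}(s)=7/8$, and the paper in fact shifts further, to $\mathfrak{Re}(1+s)=1/2$, using convexity alone.

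Two discrepancies deserve comment. First, your main term is $L(1,\chi_q)\prod_{p\mid q}(1-\tilde{\chi}_q(p)p^{-1})^{-1}$, with the Euler product \emph{inverted}, whereas the lemma as stated (and the paper's identity $L(z,\tilde{\chi}_q)=\prod_{p\mid q}(1-\tilde{\chi}_q(p)p^{-z})L(z,\chi_q)$) has it uninverted. Your version is the correct one: $L(s,\chi_q)$ is missing its Euler factors at $p\mid q$, so restoring them multiplies by $(1-\tilde{\chi}_q(p)p^{-s})^{-1}$, exactly as your $d$-sum shows (e.g.\ for $q=3$ with $\tilde{\chi}_3(3)=+1$ the sum converges to $\tfrac32 L(1,\chi_3)$, not $\tfrac23 L(1,\chi_3)$). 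So you have proved a corrected form of the statement; the mismatch is an inversion slip in the paper, not a gap in your argument, and it is harmless downstream since only $L(1,\tilde{\chi}_q)=O_q(1)$ is used. Second, your error term is really $q^{1+o(1)}Q^{-1/8}$ rather than $qQ^{-1/8}$, because of the factor $\prod_{p\mid q}(1-p^{-7/8})^{-1}=\exp\bigl(O((\log q)^{1/8})\bigr)$; your appeal to the ``non-vacuous regime'' to absorb this is loose as written. But the lemma is only ever invoked with $q$ fixed and $Q\to\infty$, so an error of the shape $O_q(Q^{-1/8})$ suffices, and this does not affect the application.
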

\begin{proof}
Let $T$ be a parameter that depends on $Q$, and $0 < c \leq 1/2$. By \cite[First effective Perron formula, II.2 on p.132]{tenenbaum2015introduction} and the residue theorem, we have
\begin{align*}
    \sum\limits_{n = 1}^{Q}\frac{\tilde{\chi}_q(n)}{n} &= \frac{1}{2\pi i}\int_{\frac{1}{\log Q} - iT}^{\frac{1}{\log Q} + iT}L(1+s, \tilde{\chi}_q)Q^s\frac{ds}{s} + R\\
    &= L(1, \tilde{\chi}_q) + I_1 + I_2 + I_3 + R,
\end{align*}
\begin{align*}
    \text{where } 
    I_1 &=\frac{1}{2\pi i}\int_{-c - iT}^{-c + iT}L(1+s, \tilde{\chi}_q)Q^s\frac{ds}{s},\;\;\;
    I_2=\frac{1}{2\pi i}\int_{-c + iT}^{\frac{1}{\log Q} + iT}L(1+s, \tilde{\chi}_q)Q^s\frac{ds}{s},\\
    I_3 &= \frac{1}{2\pi i}\int_{\frac{1}{\log Q} - iT}^{-c - iT}L(1+s, \tilde{\chi}_q)Q^s\frac{ds}{s},\text{ and }R = O\bv\sum\limits_{n=1}^\infty \frac{1}{n^{1+\frac{1}{\log Q}}(1 + T|\log(Q/n)|)}\bn. 
\end{align*}
Let $z=\sigma + it \in \C$. By  \cite[Lemma 10.15]{montgomery2007multiplicative}, 
\begin{align}
    |L(z, \chi_q)| \ll
    (1+(q(4+|t|))^{1-\sigma}\min \bv\frac{1}{|\sigma-1|}, \log q(4+|t|)\bn,  \; \text{ for } 0< \sigma \leq 2.\lb{bound}
\end{align}
Moreover, 
\begin{align}
    L(z, \tilde{\chi}_q) &= \prod\limits_{p | q}(1-\tilde{\chi}_q(p)p^{-z})  L(z, \chi_q) \lb{prodeq}
\end{align}
holds for $\sigma > 0$, since \ef{prodeq} holds when $\sigma > 1$, and it extends to the half-plane by analytic continuation since the right hand side of \ef{prodeq} is holomorphic.
Applying \ef{bound} and \ef{prodeq} to $I_1, I_2, I_3$, we obtain
\begin{align*}
    |I_1| &\ll \int_{-c - iT}^{-c + iT}|\prod\limits_{p | q}(1-\tilde{\chi}_q(p)p^{-(1+s)})| |L(1+s, \chi_q)||Q^s|\frac{|ds|}{|s|} \ll q^c(qT)^{c}Q^{-c}\log (qT), \\
    |I_2| &\ll \int_{-c + iT}^{\frac{1}{\log Q} + iT}|\prod\limits_{p | q}(1-\tilde{\chi}_q(p)p^{-(1+s)})| |L(1+s, \chi_q)||Q^s|\frac{|ds|}{|s|} \ll \frac{q^c(qT)^c\log (qT)}{T},\\
    |I_3| &\ll \int_{\frac{1}{\log Q} - iT}^{-c - iT}|\prod\limits_{p | q}(1-\tilde{\chi}_q(p)p^{-(1+s)})| |L(1+s, \chi_q)||Q^s|\frac{|ds|}{|s|} \ll \frac{q^c(qT)^c\log (qT)}{T}.
\end{align*}
We take $c = 1/2$ and $ T = \sqrt{Q}$, then 
\begin{align*} 
    \frac{1}{\log Q}\sum\limits_{n = 1}^{Q}\frac{\tilde{\chi}_q(n)}{n} &= \frac{1}{\log Q}\prod\limits_{p | q}(1-\tilde{\chi}_q(p)p^{-1}) L(1, \chi_q) + O(qQ^{-1/8}).
\end{align*}   
\end{proof}

\begin{proof}[Proof of Theorem 1.2] From the proof of Lemma \ref{lem2.1}, we can see that the number of the modified primes is independent of the choice of modified primes $p > k$ and the location of $-1$s in a sign pattern. It only depends on the number of $-1$s in an interval of length $k$. Let $\mc{I}_{q, k}(n)$ denote an interval of length $k$ such that $\tilde{\chi}_q(\mc{I}_{q, k}(n))$ contains the least number of $-1$s. $\delta_q(k)$ is equivalent to the number of $-1$s in $\tilde{\chi}_q(\mc{I}_{q, k}(n))$, for which
\begin{align*}
    \delta_q(k)= \frac{1}{2}(k - S_q(k)), \text{ where }S_q(k) = \sum\limits_{ m \in \mc{I}_{q, k}(n)} \tilde{\chi}_q (m)= \max\limits_{n \in \N \cup \{0\}} \sum\limits_{m = 1}^{k}\tilde{\chi}_q(n + m).
\end{align*}
In the following, we always assume the length of $\tilde{\chi}_q$ is less than $k$, otherwise, $\delta_q(k) = 0$ which is trivial. Next, 
we will estimate $\delta(k) = \max\limits_{q \in \N}\delta_q(k)$ by investigating its upper bound and lower bound. 
\subsection{The upper bound of $\delta(k)$} \lb{sec2.1}
We will show $\delta(k) \leq \frac{1}{2}k$.
\subsubsection{$\chi_q$ is the principal character mod $q$} Let $\chi_q$ be a principal character. In this case, it suffices to show
\begin{align*}
    A = \lim_{\alpha \to \infty}A(q^\alpha) =\lim_{\alpha \to \infty} \frac{1}{q^\alpha}\sum\limits_{n=1}^{q^\alpha}\sum\limits_{m=0}^{k-1}\tilde{\chi}_q(n+m) > 0,
\end{align*}
since $S_q(k) \geq A(q^\alpha)$ for all $ \alpha \in \N$.
Let $\alpha > k$ be an integer. After rearranging, we have
\begin{align*}
    A(q^\alpha) &= \frac{1}{q^\alpha}\Bigg(k\sum\limits_{n=1}^{q^\alpha}\tilde{\chi}_q(n) - \sum\limits_{m=1}^{k-1}(k-m)\tilde{\chi}_q(m) + \sum\limits_{m=1}^{k-1}(k-m)\tilde{\chi}_q(q^\alpha +m)\Bigg)=k\frac{1}{q^\alpha}\sum\limits_{n=1}^{q^\alpha}\tilde{\chi}_q(n),
\end{align*}
since $\tilde{\chi}_q(m) = \tilde{\chi}_q(q^\alpha +m)$ for all $1\leq m \leq (k-1)$ by \cite[Lemma 9.4]{klurman2023elliott}.
By \cite[Delange's Theorem, III.~4 on p.~326]{tenenbaum2015introduction}, $\tilde{\chi}_q$ possesses a positive mean value, so we obtain
\begin{align*}
   A = \lim_{\alpha \to \infty} A(q^\alpha) = k\lim_{\alpha \to \infty} \frac{1}{q^\alpha}\sum\limits_{n=1}^{q^\alpha}\tilde{\chi}_q(n) = \prod_{p|q}(1-p^{-1})\sum\limits_{\nu=0}^\infty\tilde{\chi}_q(p)^\nu p^{-\nu} > 0.
\end{align*}
\subsubsection{$\chi_q$ is non-principal mod $q$} 
Let $Q \geq 1$. By the definition of $\delta_q(k)$, we have
\begin{align}\lb{veryhellosum}
    \delta_q(k) \leq \frac{1}{2}\Bigg(k-\frac{1}{\log Q}\sum\limits_{n = 1}^{Q}\frac{1}{n}\sum\limits_{m = 0}^{k-1}\tilde{\chi}_q(n + m)\Bigg).
\end{align}
Then it suffices to prove that
\begin{align*}
    B = \frac{1}{\log Q}\sum\limits_{n = 1}^{Q}\frac{1}{n}\sum\limits_{m = 0}^{k-1}\tilde{\chi}_q(n + m)= o(1) \text{ as } Q\to \infty.
\end{align*}
After rearranging, we obtain
\begin{align*}
    B &= \frac{1}{\log Q} \Bigg(\sum\limits_{m = 0}^{k-1}\sum\limits_{n = 1}^{Q} \frac{\tilde{\chi}_q(n + m)}{n+m}+\sum\limits_{m = 0}^{k-1}m\sum\limits_{n = 1}^{Q}\frac{\tilde{\chi}_q(n + m)}{n(n+m)} \Bigg)\nonumber\\
    &=\frac{1}{\log Q} \Bigg(
    k\sum\limits_{n = 1}^{Q} \frac{\tilde{\chi}_q(n)}{n^{}}-\sum\limits_{n = 1}^{k-1} (k-n)\frac{\tilde{\chi}_q(n)}{n} + \sum\limits_{n = 1}^{k-1} (k-n)\frac{\tilde{\chi}_q(Q+n)}{(Q+n)} + O(k^2)\Bigg) \nonumber\\
    &= \frac{k}{\log Q} \sum\limits_{n = 1}^{Q} \frac{\tilde{\chi}_q(n)}{n} + O(\frac{k^2}{\log Q}). 
\end{align*}
If we choose $Q$ to be sufficiently large in terms of $k$ and $q$, we will obtain $B = o(1)$ by Lemma \ref{lem2.2}. As a result, taking $Q \to \infty$ we obtain $\delta(k) \leq \frac{1}{2}k$ by \ef{veryhellosum}.

\subsection{The lower bound of $\delta(k)$} We will bound $\delta(k)$ from below by $\delta_{q}(k)$ where $q$ is prime and $q < k$, as $\delta(k) \geq \delta_{q}(k)$ for all $q \in \N$. Let $q$ be a prime with $q<k$. Suppose $k = \sum\limits_{i=0}^\nu a_iq^i = k_0$ where $\nu$ is the largest integer such that $q^\nu \leq k$, $0\leq a_i < q$ for $i=0, \ldots, \nu$, and $a_\nu \neq 0$. Let $k_{j}$ denote the number of elements in $\mc{I}_{q, k_0}(n)$ that are divisible by $q^j$ i.e., $k_j = \sum\limits_{i = j}^\nu a_iq^{i-j} $.
When $q < k$, $-1$s in $\tilde{\chi}_q(\mc{I}_{q, k_0}(n))$ come from non-residues in a complete set of residue classes, non-residues in the incomplete residue classes, and the elements in $\mc{I}_{q, k}(n)$ divisible by $q$ of value $-1$ which can be seen as $-1$s in $\tilde{\chi}_q(\mc{I}_{q, k_1}(n))$. If $k_1 \geq q$, we count $-1$s in $\tilde{\chi}_q(\mc{I}_{q, k_1}(n))$ by replacing $\tilde{\chi}_q(\mc{I}_{q, k_0}(n))$ with $\tilde{\chi}_q(\mc{I}_{q, k_1}(n))$ and decompose these into complete, incomplete and divisible by $q$ sets, then replace $\tilde{\chi}_q(\mc{I}_{q, k_1}(n))$ by $\tilde{\chi}_q(\mc{I}_{q, k_2}(n))$ until $k_{\nu} < q$.

Let $I$ denote an incomplete set of length $l < q$ other than the last incomplete set of length $a_\nu$ and $0 \leq r \leq l$ be an integer. Suppose $I = I_1 \cup I_2$ where $I_1 = \{N_1q - r, N_1q-(r-1), \ldots, N_1q-1\}$ and $I_2 = \{N_2q + 1, N_2q + 2, \ldots, N_2q + (l-r)\}$ for some $N_1, N_2 \in \N$ with $N_1 < N_2$. We have
\begin{align*}
    \sum\limits_{n \in I}\chi_q(n) = \sum\limits_{m = 0}^{l}\chi_q(-r+m).
\end{align*} 
Then the minimal number of $-1$s in an incomplete set of total length $l$ (except the last incomplete set of length $a_\nu$), $\delta_q'(l)$ can be represented by
\begin{align*}
    \delta_q'(l) = \frac{1}{2}(l - S_q'(l)) \text{ where }S_q'(l) = \max\limits_{q-l\leq n \leq q} \sum\limits_{m = 0}^{l}\chi_q(n+m).
\end{align*}
Whereas, the minimal number of $-1$s in an incomplete set of length $a_\nu$ is $\delta_q(a_\nu)$, as the starting point of the last incomplete set is unrestricted.
Following the recurrence, for $i = 0, \ldots, \nu - 1$, we have
\begin{align}\lb{onesum}
    \#\{m \in \mc{I}_{q, k_i}: \tilde{\chi}_q(m) = -1\}&\geq \frac{1}{2}(q-1)k_i+\delta'_q(a_i) +  \#\{m \in \mc{I}_{q, k_{i+1}}: \tilde{\chi}_q(m) = -1\},
\end{align}
and when $i = \nu$,
\begin{align}\lb{twosum}
        \#\{m \in \mc{I}_{q, k_\nu}: \tilde{\chi}_q(m) = -1\}&\geq \delta_q(a_\nu).
\end{align}
By summing \ef{onesum} and \ef{twosum} over $i$, we obtain
\begin{align}
    \delta_q(k) 
    &\geq \sum\limits_{i = 1}^{\nu}\frac{1}{2}(q-1) k_i + \sum\limits_{j = 0}^{\nu-1}\delta_q'(a_i) +  \delta_q(a_\nu) \nonumber \\ 
   &\geq \frac{1}{2}(q-1) \sum\limits_{i = 1}^{\nu} \sum\limits_{j = i}^{\nu}a_jq^{j-i} +\frac{1}{2}\bv\sum\limits_{i=0}^\nu a_i - \sum\limits_{i = 0}^{\nu-1} S_q'(a_i) - S_q(a_\nu)\bn \nonumber\\ 
   &\geq\frac{1}{2}k - \frac{1}{2}\bv\sum\limits_{i = 0}^{\nu-1} S_q'(a_i) + S_q(a_\nu)\bn. \lb{delta3}
\end{align}
If we choose $q = 3$, then we have
\begin{align}\lb{for3}
    0 \leq S_3'(a_i) \leq 1 \text{ for } i = 0, \ldots, \nu-1 \text{ and } 1 \leq S_3(a_\nu) \leq 2 .
\end{align}
Applying \ef{for3} to \ef{delta3}, we obtain
\begin{align*}
    \delta_3(k) \geq \frac{1}{2}k - \frac{1}{2}\bv\sum\limits_{i = 0}^{\nu-1} 1 + 2\bn =  \frac{1}{2}k - \frac{1}{2}\bv \Big\lfloor\frac{\log k}{\log 3}\Big\rfloor + 2\bn = \frac{1}{2}k + O(\log k).
\end{align*}
This implies that $\delta(k) \geq \frac{1}{2}k + O(\log k)$. Combining with the upper bound from the previous section, we have $\delta(k) = \frac{1}{2}k + O(\log k)$.\\
\begin{rem}
    For the lower bound of $\delta_q(k)$ when $q < k$, one can bound \ef{delta3} by using the Pólya-Vinogradov inequality instead of choosing $q = 3$. Then the error term will be $O(\frac{\log k}{\log q} q^{1/2+o(1)}) = o(k)$ instead of $O(\log k)$.
\end{rem}

\end{proof}
\begin{proof}[Proof of Corollary 1.4]
Since $f$ is of length $k$, we have
\begin{equation}\lb{10}
    S = \sum\limits_{n\in K}\prod\limits_{j = 0}^{k}(1+f(n+j)) = 0 \text{ for any $K \subset \N$}.
\end{equation}
Suppose $K = (0, x]\cap \N \text{ with } x>0$ and $I = \{0, 1, \ldots, k\}$. Then expand \ef{10} and take $x \to \infty$, we have
\begin{align*}
     0= \lim_{x\to \infty}S &= \lim_{x\to \infty}(x + S_1(x) + S_2(x) + \cdots + S_{k+1}(x)),\\
    \text{where } S_i(x) &=\sum\limits_{\substack{I_i \subseteq I\\ |I_i| = i, 1\leq i \leq k+1}}\sum\limits_{n\leq x}\prod_{j\in I_i}f(n+j).
\end{align*}
By Conjecture \ref{conj1.3}, if $f$ is a non-pretentious function then $S_i(x)$ = $o(x)$ as $x\to \infty$ for $i = 1, \ldots, k+1$. Then we have
\begin{equation*}
 \lim_{x\to \infty}S =  x + o(x) \neq 0
\end{equation*}
which contradicts \ef{10}. As a consequence, $f$ must be pretentious to a twisted character $\chi(n)n^{it}$. In our case, we may assume $\chi$ is real and $t=0$, in other words, $f$ must be pretentious to a real primitive character or principal character $\chi$. Indeed, since if $\chi$ is not real, we have
\begin{align*}
    \mb{D}(f, \chi(n) n^{it};x) \geq \frac{1}{4} \sqrt{\log \log x} + O_{\chi}(1)
\end{align*}
by \cite[Lemma C.1]{matomaki2015averaged} and $\mb{D}(f, \chi(n) n^{it};x) \to \infty$ as $x \to \infty$ that contradicts $\mb{D}(f, \chi(n) n^{it};x) < \infty$. Also, by \ef{tri}, we have
\begin{align} \lb{pretentious}
    2\mb{D}(f, \chi(n) n^{it};x)\geq \mb{D}(1, \chi^2(n) n^{i2t};x) =\mb{D}(1, \chi_0(n) n^{i2t};x) =\log (1 + |2t|\log x) + O(1) 
\end{align}
and the right hand side of \ef{pretentious} tends to $\infty$ as $x \to \infty$. This contradicts $\mb{D}(f, \chi(n) n^{it};x) < \infty$, unless $|t| \ll 1/\log x$ as $x \to \infty$ which implies $t=0$. Moreover, according to the extension process, the size of \ef{defofJ} can be bounded by the upper bound of $\delta(k)$, for which
\begin{align*}
    |\mc{J}(k)| \leq \frac{1}{2}k.
\end{align*}

\end{proof}

\begin{acknowledgements}
    I would like to thank my supervisor Sacha Mangerel for fruitful discussions and constructive comments and Oleksiy Klurman for useful suggestions.
\end{acknowledgements}

\bibliographystyle{plain}
\addcontentsline{toc}{chapter}{Bibliography}
\bibliography{Signpattern}

\end{document}